\RequirePackage{fix-cm}
\RequirePackage{amsmath}
\documentclass{amsart}
\usepackage{pinlabel}
\usepackage[all]{xy}
\usepackage[mathscr]{euscript}
\usepackage{epsfig,amscd,amssymb,amsmath,amsfonts}
\usepackage{color}
\usepackage{tikz}
\usepackage{amsthm}
\newtheorem{theorem}{Theorem}[section]

\newtheorem{proposition}{Proposition}[section]
\newtheorem{definition}[theorem]{Definition}

\newtheorem{example}[theorem]{Example}

\newtheorem{remark}[theorem]{Remark}

\numberwithin{equation}{section}

%    Absolute value notation

%    Blank box placeholder for figures (to avoid requiring any
%    particular graphics capabilities for printing this document).

%\pagestyle{empty}

\newcommand{\kmod}{k\textrm{-mod}}
\begin{document}

\title{Higher Order Hochschild (Co)homology of Noncommutative Algebras}

\author{Bruce R. Corrigan-Salter}        

%\address{Department of Mathematics \at Wayne State University, Detroit, MI 48202, USA \\
%\email{brcs@wayne.edu}}          

%\author{Bruce R. Corrigan-Salter}
%    Address of record for the research reported here
\address{Department of Mathematics,
Wayne State University,
Detroit, MI 48202, USA}
\email{brcs@wayne.edu}
\keywords{Hochschild, cohomology, homology, higher order, multimodule} 
\subjclass[2010]{16E40, 16S80, 18G30, 55U10}

\begin{abstract} 
Hochschild (co)homology and Pirashvili's higher order Hochschild (co)homology are useful tools for a variety of applications including deformations of algebras.  When working with higher order Hochschild (co)homology, we can consider the (co)homology of any commutative algebra with symmetric coefficient bimodules, however traditional Hochschild (co)homology is able to be computed for any associative algebra with not necessarily symmetric coefficient bimodules.  In a previous paper, the author generalized higher order Hochschild cohomology for multimodule coefficients (which need not be symmetric).  In the current paper, we continue to generalize higher order Hochschild (co)homology to work with associative algebras which need not be commutative and in particular, show that simplicial sets admit such a generalization if and only if they are one dimensional.

\end{abstract}

\maketitle

%\section*{This is an unnumbered first-level section head}

%
%%%%%%%%%%%%%%%%%%%%%%%%%%%%%%%%%%%%%%%%%%%%%%%%%%%

%%%%%%%%%%%%%%%%%%%%%%%%%%%%%%%%%%%%%%%%%%%%%%%%%%%%
\section*{Introduction}
Hochschild (co)homology was first introduced in 1945 by Hochschild in \cite{MR0011076}.  Since then, mathematicians have found Hochschild (co)homology to be incredibly useful for a variety of applications.  In the past couple decades, energy has been placed into generalizing Hochschild's construction.  One generalization, higher order Hochschild homology was introduced by Pirashvili in \cite{MR1755114}.  This construction assigns a chain complex to any simplicial set, provided the algebra is commutative and the bimodule is symmetric.  More recently, the author has worked to generalize Pirashvili's construction to work with multimodules, modules who have more than one action  (see \cite{MR3338542}).  It is the aim of the present paper to further generalize this construction to noncommutative algebras.  In particular, we aim to determine a list of all simplicial sets whose Hochschild (co)homology is defined even when working over noncommutative algebras.  We do so by finding the ``maximal'' algebraic structure allowed by each simplicial set $X_\bullet$.  The main result found here is that the Hochschild (co)homology of $X_\bullet$ is defined over a noncommutative algebra $A$ if and only if $X_\bullet$ is a one dimensional simplicial set.  

Given a field, $k$, $k$-algebra, $A$ and an $A$-bimodule, $M,$ we can associate a chain complex $C_{\bullet}(A,M),$ whose homology was introduced by Hochschild in \cite{MR0011076} and is referred to as the Hochschild homology of $A$ with coefficients in $M.$
To define $C_{\bullet}(A,M),$ let               
$$C_n(A,M)\colon = M\otimes A^{\otimes n}$$ and differentials be given by $$\delta_n = \sum_{i=0}^n (-1)^id_i$$ where $d_i \colon M\otimes A^{\otimes n} \rightarrow M\otimes A^{\otimes n-1}$ are defined as follows:
$$d_i(m\otimes a_1 \otimes a_2 \otimes \cdots \otimes a_n)= \left\{
                \begin{array}{ll}
                  ma_1\otimes a_2\otimes \cdots \otimes a_n & i=0\\
                  m\otimes a_1\otimes \cdots \otimes a_i a_{i+1} \otimes \cdots \otimes a_n & 1\leq i \leq n-1\\
                  a_nm\otimes a_1 \otimes \cdots \otimes a_{n-1} & i=n
                \end{array}
              \right.$$
              It can be observed that the construction above describes a simplicial $k$-module associated to a simplicial model of $S^1$.  Furthermore, Pirashvili illustrated in \cite{MR1755114} that we can start with any arbitrary finite simplicial set and construct a simplicial $k$-module whose associated homology (of the complex given by alternating face maps) gives a generalization of Hochschild homology, known as higher order Hochschild homology.  For this construction we start with the assumptions that $A$ is a commutative $k$-algebra and $M$ is a symmetric $A$-bimodule.  These assumptions are necessary as we will see from Theorem \ref{MAIN}.  For example $S^2$ (with the minimal simplicial decomposition) requires that $A$ be commutative (see Proposition \ref{SPHERE}). For higher order Hochschild homology, consider the Loday functor $L(A,M)$ from the category of finite pointed sets, $\Gamma$ to the category of $k$-modules, $\kmod$ (see \cite[6.4.4]{MR1217970} and \cite{MR1755114}), given by      $$L(A,M)\colon \Gamma \rightarrow \kmod$$ $$m_+ \rightarrow M\otimes \bigotimes_{\{i\in m_+ |i\neq 0\}} A$$ for objects $m_+=\{0,1,\cdots,m\}\in \Gamma.$ (where $0$ is the fixed element).  For morphisms $\varphi\colon m_+ \rightarrow n_+ \in \Gamma$ let $$L(A,M)\varphi(m\otimes a_1\otimes \cdot \otimes a_m)=(b_0m\otimes \cdots \otimes b_n)$$ where $$b_i=\prod_{\{j\in m_+|j\neq 0, \varphi(j)=i \}}a_j.$$
              
Similarly we have a functor \cite{MR2383113} $$\mathcal{H}(A,M) \colon \Gamma \rightarrow \kmod$$ $$m_+ \rightarrow \hom (\bigotimes_{\{i\in m_+ |i\neq 0\}} A,M)$$ where for a map $\varphi \colon m_+ \rightarrow n_+$ and map $f\colon \bigotimes_{\{i\in n_+ |i\neq 0\}} A\rightarrow M$ we have 
              $$\mathcal{H}(A,M)\varphi (f)( a_1 \otimes \cdots \otimes a_m)=b_0f(b_1\otimes \cdots \otimes b_n)$$ where $$b_i=\prod_{\{j\in m_+ |j\neq 0, \varphi(j)=i\}}a_j.$$
              
It was realized by Pirashvili in \cite[3.1]{MR1755114} that for any finite simplicial set $X_\bullet ,$ we can consider the chain complex (after defining differentials to be the sums of alternating face maps) $$\Delta^{op}\xrightarrow{X_\bullet}\Gamma\xrightarrow{L(A,M)}\kmod$$ or similarly the cochain complex (see \cite{MR2383113}) $$\Delta^{op}\xrightarrow{X_\bullet}\Gamma\xrightarrow{\mathcal{H}(A,M)} \kmod.$$
             The resulting homologies are referred to as higher order Hochschild homology and higher order Hochschild cohomology respectively.                            
              
To see how this generalizes traditional Hochschild (co)homology we consider the minimal simplicial decomposition of the pointed simplicial set $S^1_\bullet \colon \Delta^{op}\rightarrow \Gamma$ (with one non-degenerate $1$-simplex).  We have a simplicial $k$-module $$\Delta^{op}\xrightarrow{S_\bullet^1} \Gamma \xrightarrow{L(A,M)} \kmod$$ which gives the Hochschild chain complex.  The associated cochain complex $\Delta^{op}\xrightarrow{S_\bullet^1}\Gamma \xrightarrow{\mathcal{H}(A,M)} \kmod$ is the Hochschild cochain complex. The resulting homologies are known as Hochschild homology of $A$ with coefficients in $M$ and Hochschild cohomology of $A$ with coefficients in $M$ respectively.

             Hochschild (co)homology and higher order Hochschild (co)homology have been shown to be incredibly useful tools for a variety of concepts. Hochschild cohomology and higher order Hochschild cohomology have been used to study deformations of algebras and modules (for example see \cite{MR0161898}, \cite{MR0171807}, \cite{MR981619}, \cite{MR551624}, \cite{MR1217970},
             \cite{MR3465889} and \cite{MR2153227}).  The ability to use higher order Hochschild cohomology for additional deformations is a primary inspiration for this work.  One has to wonder if deformations of noncommutative algebras would be possible in the higher order setting and this paper seeks to provide those studying deformation theory with an answer about the types of algebras allowed.  In addition to being useful in studying deformation theory, in 1976 Dennis developed a map from K-theory to Hochschild homology which was benefited by the introduction of topological Hochschild homology in \cite{bokstedt} by B{\"o}kstedt in 1985 as an answer to Goodwillie's conjecture (see \cite{MR3013261}).  In addition, the algebraic structures of Hochschild (co)homology, which include the Deligne conjecture, have been of great interest (see a treatment by Tradler and Zeinalian for example \cite{MR2184812} and \cite{MR2353864}).

             One thing that can be noticed when considering the history of Hochschild cohomology is that traditional Hochschild (co)homology is of any associative $k$-algebra, $A$ with coefficients in any $A$-bimodule, $M,$ however higher order Hochschild (co)homology was restricted to commutative $k$-algebras with coefficients in symmetric $A$-bimodules.  In \cite{MR3338542} the author generalizes the higher order Hochschild construction to have coefficients in not necessarily symmetric multimodules (see Definition \ref{multimodule}).  The modules able to be used depend heavily on the simplicial sets that the (co)chain complexes are built over.  In this paper we aim to generalize higher order Hochschild (co)homology to work with not necessarily commutative algebras and not necessarily symmetric multimodules.  In particular,  we will show that simplicial sets allow such a generalization to noncommutative algebras if and only if they are one dimensional.  We will do so by demonstrating the best case scenario for each simplicial set.  Our construction for all arbitrary simplicial sets is in line with Pirashvili and Richter's construction in \cite{MR1899698} (see Subsection \ref{PR}) where they gave a description of a simplicial noncommutative circle as a functor from the simplicial category, $\Delta^{op}$ to the category of finite noncommutative sets, which allowed them to construct Hochschild (co)homology (as well as other homologies of functors) with not necessarily commutative algebras and not necessarily symmetric bimodules.  

While this paper is focused on simplicial sets and Pirashvili's generalization, it is worth mentioning that there are additional equivalent generalizations.  In particular, Lurie's topological chiral homology (see \cite{MR2555928}), which is equivalent to Francis's factorization homology (see \cite{MR3040746}) is a generalization of Hochschild homology, which associates to a n-framed manifold $N$ and little n-cubes algebra $A,$ a chain complex.  When $N$ is $S^1,$ and $A$ is an associative algebra, it has been shown by both Lurie and Francis that the chain complex is equivalent to the traditional Hochschild chain complex.  Furthermore, Ginot, Tradler and Zeinalian have shown in \cite{MR3173402} that for a commutative algebra $A$ and n-framed manifold $N,$ factorization/topological chiral homology is equivalent to the higher order Hochschild chain complex.  For direct constructions of these chain complexes, see \cite{Markarian}.  In \cite{MR3431668}, Ayala and Francis have even shown that there is an equivalence between the category of little n-cubes algebras and homology theories of n-framed manifolds via factorization homology.  

 \subsection{Organization of Paper}
In Section \ref{BACK}, we provide background knowledge about the authors previous work on coefficient modules.  In Section \ref{notnec} we describe how to construct the higher order Hochschild cohomology cochain complex for algebras which are not necessarily commutative.  We do so by describing the necessary characteristics a simplicial set must have in order to work with noncommutative algebras, which amounts to considering orderings on fibers of face maps.  Our main Theorem is provided in Section \ref{SSTHATWORK}.  It determines that the only simplicial sets that can work with noncommutative algebras are one dimensional. In Section \ref{second} we note that the construction in this paper can be extended to higher order Hochschild cohomology of pairs of simplicial sets from \cite{MR3566502}.

\section{Background}
\label{BACK}
In this paper we fix a field $k$ and denote $\otimes_k$ by $\otimes$.  We assume that the reader is familiar with simplicial sets and has some familiarity with Hochschild (co)homology.  We hope that the introduction along with an extra explanation below provide a sufficient description of Hochschild (co)homology.

              \subsection{Higher Order Hochschild Cohomology}
If we consider the definition of higher order Hochschild homology we notice that the action of $A$ on $M$ tells us what happens in the $M$ tensor factor of the codomain.  The reason $M$ is not allowed to be a nonsymmetric bimodule is because it would be difficult in general to determine which of the two actions of $A$ on $M$ to choose for each face map.  In addition, to get a simplicial $k$-module it is easier to satisfy the composition laws by choosing one action (which is forced to be both a left and right action since $A$ is commutative).  In \cite{MR3338542} the author constructs a generalized version of higher order Hochschild cohomology which allows multimodule  coefficients.  In order to allow more actions for the coefficient modules and still get a cosimplicial $k$-module, we need to make action identifications.  

Before proceeding with the main Theorem from \cite{MR3338542}, we provide a definition of multimodules.  

\begin{definition}
\label{multimodule}
Let $A$ be a $k$-algebra.  A \emph{left} $n$-\emph{multimodule} is an abelian group $M$ with $n$ distinct left $A$-module structures $\iota_i$ which commute in the sense that $\iota_i(a_i)\iota_j(a_j)m=\iota_j(a_j)\iota_i(a_i)m$ for all $a_i, a_j \in A, m \in M, 1\leq i \neq j \leq n.$  Similarly, a \emph{right} $n$-\emph{multimodule} is an abelian group $M$ with $n$ distinct right $A$-module structures $\rho_i$ which commute in the sense that $m \rho_i(a_i)\rho_j(a_j)=m\rho_j(a_j)\rho_i(a_i)$ for all $a_i, a_j \in A, m \in M, 1\leq i \neq j \leq n.$  Lastly, a \emph{left/right} $(l,r)$-\emph{multimodule} is an abelian group $M$ which has a left $l$-multimodule structure as well a right $r$-multimodule structure, where left and right actions commute in the sense that $(\iota_i(a_i)m)\rho_j(a_j)=\iota_i(a_i)(m\rho_j(a_j))$ for all $a_i, a_j \in A, m \in M, 1\leq i  \leq n, 1\leq j \leq n.$
\end{definition}
We will usually drop the $n$ or $(l,r)$ from our notation and simply call $M$ a multimodule.  

\begin{remark}
There are times when the type of action (left/right) will be unknown.  In these situations we let $\Lambda_i$ denote an action and we work to determine the type of action when the setting is clear.
\end{remark}
 
\begin{example}
An $A$-bimodule $M$ has a left action and a right action so $M$ is an example of an $A$ left/right (1,1)-multimodule with two actions.
\end{example}

We now provide the main Theorem of \cite{MR3338542} which tells us how many actions each simplicial set is allowed by identifying certain actions.  Note that when $A$ is commutative, any left action also satisfies the definition of a right action.  The module below is just assumed to be a multimodule.  
 
\begin{theorem} \cite[1.1]{MR3338542}
\label{higher}
Let $A$ be a commutative $k$-algebra.  Given a pointed simplicial set $X_{\bullet}$, there exists a cosimplicial $k$-module $(M,X)^{\bullet}$ associated to an $A$-module $M$ given by 

$$(M,X)^n= \hom_k (k \otimes_k 
\bigotimes\limits_{{\substack{\sigma \in X_n\\ \sigma \neq \ast}}} A,M)$$ 

with coface and codegeneracy maps given by

$$d_n^i f(1\otimes_k \bigotimes\limits_{{\substack{\sigma \in X_{n+1}\\ \sigma \neq \ast}}}a_{\sigma})=\prod\limits_{{\substack{\sigma \in X_{n+1}\\ d_i (\sigma)=\ast}}} (\Lambda_{(i,n)}^{\sigma}(a_{\sigma}))\cdot f(1\otimes_k \bigotimes\limits_{{\substack{\Omega\in X_n \\ \Omega \neq \ast}}} \prod\limits_{{\substack{\sigma \in X_{n+1} \\ d_i(\sigma)=\Omega}}}a_{\sigma})$$

and

$$s_n^i f(1\otimes_k \bigotimes\limits_{{\substack{\sigma \in X_{n+1}\\ \sigma\neq\ast}}}a_{\sigma})=f(1\otimes_k\bigotimes\limits_{{\substack{\Omega \in X_{n+1} \\ \Omega \neq\ast}}} 1\cdot \prod\limits_{{\substack{\sigma \in X_n\\ s_i(\sigma)=\Omega}}}a_{\sigma})$$

if the actions $\Lambda_{(-,-)}^-$ on $M$ satisfy the following for simplices $\sigma, \Omega$ and $\mu$:

\begin{itemize}
\item[i)]  $\Lambda_{(j,n+1)}^{\sigma} = \Lambda_{(i,n+1)}^{\sigma}$ if $\sigma \neq \ast , d_i (\sigma)=d_j(\sigma)=\ast$ and the dimension of $\sigma$ is at least 2 and $i<j.$  
\item[ii)]  $\Lambda_{(j,n+1)}^{\sigma}=\Lambda_{(j-1),n}^{\Omega}$ if $d_i(\sigma)=\Omega , d_j(\sigma)=\ast , d_{j-1}(\Omega)=\ast$ and the dimension of $\sigma$ is at least 2. 
\item[iii)]  $\Lambda_{(i,n)}^{\Omega}=\Lambda_{(j-1,n)}^{\mu}$ if $d_i(\Omega)=\ast , d_{j-1}(\mu)=\ast$ and there exists a $\sigma$ of dimension at least 2 where $d_j(\sigma)=\Omega$, $d_i(\sigma)=\mu$ and $i<j.$ 
\item[iv)]  $\Lambda_{(i,n)}^{\Omega}=\Lambda_{(i,n+1)}^{\sigma}$ if $d_i(\sigma)=\ast , d_i(\Omega)=\ast , d_j(\sigma)=\Omega$ and the dimension of $\sigma$ is at least 2.  
\end{itemize}
where $\Lambda_{(i,n)}^\sigma (a)$ represents the $\Lambda_{(i,n)}^\sigma$ action of $a \in A$ whenever $0\leq i\leq 1,$  $\sigma \in X_{n+1}$ and $d_i(\sigma)=\ast$.

\end{theorem}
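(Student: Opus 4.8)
\emph{Proof proposal.}\quad The plan is to verify directly that, when $M$ is an $A$-multimodule whose actions $\Lambda^{-}_{(-,-)}$ satisfy (i)--(iv), the displayed formulas are well defined and assemble into a cosimplicial $k$-module. First I would dispose of well-definedness. The inner products $\prod_{\sigma\in X_{n+1},\,d_i(\sigma)=\Omega}a_\sigma$ in the formula for $d_n^i$ are unambiguous because $A$ is commutative --- this is exactly the hypothesis the present paper will later weaken to a choice of ordering on each fiber $d_i^{-1}(\Omega)$ --- while the products $\prod_{\sigma\in X_n,\,s_i(\sigma)=\Omega}a_\sigma$ in $s_n^i$ have at most one factor since the degeneracy operators of $X_\bullet$ are injective. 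The leading operator $\prod_{\sigma,\,d_i(\sigma)=\ast}\Lambda^{\sigma}_{(i,n)}(a_\sigma)$ is a well-defined endomorphism of $M$ because, by Definition \ref{multimodule}, any two of the module structures it involves commute, and the whole expression is $k$-multilinear in the $a_\sigma$; so each $d_n^i$ and $s_n^i$ is a genuine element of $\hom_k$. This step uses neither the simplicial identities of $X_\bullet$ nor (i)--(iv).

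Next I would check the cosimplicial identities, postponing the main one. The relation $s^j s^i=s^i s^{j+1}$ involves no actions and follows at once from $s_i s_j=s_{j+1}s_i$ in $X_\bullet$ and associativity in $A$. The collapse identities $s^j d^j=s^j d^{j+1}=\mathrm{id}$ follow from $d_j s_j=d_{j+1}s_j=\mathrm{id}$ and injectivity of the $s_i$: every simplex lying in the relevant face-fiber over the basepoint lies outside the image of $s_j$, so its $a$-slot is $1$ and the leading operator disappears. The mixed identities $s^j d^i=d^i s^{j-1}$ ($i<j$) and $s^j d^i=d^{i-1}s^j$ ($i>j+1$) are the first place an identification is needed: after applying $d_i s_j=s_{j-1}d_i$ (resp.\ $d_i s_j=s_j d_{i-1}$) in $X_\bullet$ the two leading operators become $\prod_{\tau,\,d_i(\tau)=\ast}\Lambda^{s_j\tau}_{(i,n)}(a_\tau)$ and $\prod_{\tau,\,d_i(\tau)=\ast}\Lambda^{\tau}_{(i,n-1)}(a_\tau)$, which agree because $d_j(s_j\tau)=\tau$ and $d_i(s_j\tau)=\ast$ place us in the situation of hypothesis (iv).

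The main obstacle, and the step I would treat most carefully, is the coface--coface relation $d_{n+1}^{j}d_n^{i}=d_{n+1}^{i}d_n^{j-1}$ for $i<j$; I claim it alone already forces all four hypotheses. Evaluating both sides on a generator $1\otimes\bigotimes_{\sigma\in X_{n+2}}a_\sigma$ and using $d_i d_j=d_{j-1}d_i$ in $X_\bullet$ together with commutativity and associativity in $A$, one sees the tuple fed to $f$ is $\bigl(\prod_{\sigma:\,d_i d_j(\sigma)=\mu}a_\sigma\bigr)_{\mu\in X_n,\,\mu\neq\ast}$ on both sides; and since $d_i d_j(\sigma)=\ast\iff d_{j-1}d_i(\sigma)=\ast$, a simplex $\sigma$ contributes an action on the left exactly when it does on the right, and exactly once. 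It remains to match, for each such $\sigma$, the action attached to $a_\sigma$. Sorting by whether $d_i(\sigma),d_j(\sigma)$ equal the basepoint gives four cases: (A) $d_i(\sigma)=d_j(\sigma)=\ast$, where the two sides give $\Lambda^{\sigma}_{(j,n+1)}$ versus $\Lambda^{\sigma}_{(i,n+1)}$, equal by (i); (B) $d_j(\sigma)=\ast\neq d_i(\sigma)$, giving $\Lambda^{\sigma}_{(j,n+1)}$ versus $\Lambda^{d_i\sigma}_{(j-1,n)}$, equal by (ii); (C) $d_i(\sigma)=\ast\neq d_j(\sigma)$, giving $\Lambda^{d_j\sigma}_{(i,n)}$ versus $\Lambda^{\sigma}_{(i,n+1)}$, equal by (iv); and (D) $d_i(\sigma),d_j(\sigma)\neq\ast$ but $d_i d_j(\sigma)=\ast$, giving $\Lambda^{d_j\sigma}_{(i,n)}$ versus $\Lambda^{d_i\sigma}_{(j-1,n)}$, equal by (iii) (using $d_{j-1}d_i=d_i d_j$ to identify the basepoint conditions). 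Hence the two operators coincide and the relation holds.

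I expect the genuine difficulty to be purely organizational: tracking, across the two orders of composition, which face of which simplex is collapsed at which stage and therefore which of the identified actions $\Lambda^{\sigma}_{(i,n)}$ is applied; hypotheses (i)--(iv) are engineered precisely so that every such comparison closes. Once the coface--coface case is in hand, the remaining cosimplicial identities follow by the same but strictly easier analysis. Conceptually this says that, under (i)--(iv), the data of $X_\bullet$ together with the action labels organizes into a functor on $\Delta^{op}$ valued in a decorated version of the category of finite pointed sets, whence post-composing with a Loday-type functor and with $\hom_k(-,M)$ yields the cosimplicial module; but I would present the explicit verification above, since that decorated-set bookkeeping is exactly what the present paper will later make precise.
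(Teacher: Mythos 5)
Your verification is correct and is exactly the expected argument: the paper itself does not prove Theorem \ref{higher} (it is imported verbatim from \cite[1.1]{MR3338542}), and the proof there proceeds, as you do, by directly checking the cosimplicial identities, with the coface--coface relation $d^jd^i=d^id^{j-1}$ ($i<j$) splitting into your four basepoint cases that are closed precisely by hypotheses (i)--(iv), and the mixed and collapse identities handled via (iv), injectivity of degeneracies, and unitality. Your case bookkeeping (A)$\leftrightarrow$(i), (B)$\leftrightarrow$(ii), (C)$\leftrightarrow$(iv), (D)$\leftrightarrow$(iii) matches the intended use of the identifications, so nothing further is needed.
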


\section{Not Necessarily Commutative Algebras}
\label{notnec}

We would like to develop higher order Hochschild (co)homology to accept non-commutative algebras, but in order to do so we need to provide an order in which to multiply elements in the equation from Theorem \ref{higher}.  In the following section we will determine algebras and modules allowed for higher order Hochschild cohomology and leave it to the reader to check that the same holds for homology. 

\begin{definition} Given a simplicial set $X_\bullet$ and a map $f\colon X_n\rightarrow X_m$, for an $f$-fiber $S=\{\sigma_i| f(\sigma_i)=\tau\}$ for some $\tau \in X_m$ we refer to an ordering on $S$ as an \emph{f-ordering} of $S$.  
\end{definition} 

Now if we consider all $d_i$-orderings on the corresponding subsets of $X_n$ we see that this gives a way to multiply elements of our algebra for each $d_i$ if $A$ is not necessarily commutative (where products of elements in each tensor factor are multiplied with elements represented by smaller simplices on the left and elements represented by larger simplices on the right).  The main issue is that in order for the associated cosimplicial $k$-module to satisfy the cosimplicial identities, we need the $d_i$-orderings to have some compatibilities.
We start with the following remark.
\begin{remark} For an two maps $f\colon X_n \rightarrow X_m$ and $g\colon X_m\rightarrow X_k$ we have two associated orders on $gf$-fibers.  We have the $gf$-ordering, but we also have \emph{composition induced} orderings.  In particular, if $\sigma_i$ and $\sigma_j$ are simplices in $X_n$ then $\sigma_i < \sigma_j$ in the composition induced ordering if and only if $\sigma_i < \sigma_j$ in the $f$-ordering or $f(\sigma_i)<f(\sigma_j)$ in the $g$-ordering otherwise $\sigma_j < \sigma_i$ in the composition induced ordering. 
\end{remark} 

\begin{definition}
Given a simplicial set $X_\bullet$ with a choice of simplicial orderings for each composition of face maps.  We say that $X_\bullet$ admits a \emph{not necessarily commutative multiplicative ordering} (which we denote NNCMO) if for each composition of face maps $f=d_{i_k}\cdots d_{i_1}\colon X_n\rightarrow X_{n-k}$ and $g=d_{i_{j+k}}\cdots d_{i_{k+1}}\colon X_{n-k}\rightarrow X_{n-j-k}$ the composition induced ordering from $gf$ agrees with the $gf$-ordering for all $gf$-fibers whose image under $gf$ is not the basepoint. 
\end{definition}
 We now have the following Theorem.
 
\begin{theorem}
Given a simplicial set $X_\bullet$, non-commutative algebra $A$ and $A$-multimodule (whose left actions can also be considered as right actions) $M,$ the Hochschild cosimplicial $k$-module $(A,M,X)^\bullet$ exists if and only if $X_\bullet$ admits an NNCMO.
\end{theorem}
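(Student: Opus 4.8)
The plan is to treat the construction $(A,M,X)^\bullet$ as the one obtained from Theorem \ref{higher} by the single modification that, in every coface map, each product $\prod_{\sigma:\,d_i(\sigma)=\Omega}a_\sigma$ over a $d_i$-fiber (and likewise the action prefix $\prod_{\sigma:\,d_i(\sigma)=\ast}\Lambda^\sigma(a_\sigma)$) is taken in the chosen $d_i$-ordering, smaller simplices on the left; the codegeneracies are left untouched, which is legitimate because each $s_i$ is a split monomorphism (it has a one-sided inverse among the $d_i$) and hence has singleton fibers, so no ordering is ever needed on the degeneracy side. Because $M$ is assumed to be a multimodule whose left actions are simultaneously right actions, the action-identification hypotheses (i)--(iv) of Theorem \ref{higher} hold automatically, so the content of the statement is precisely that the ordering-sensitive cosimplicial identities hold exactly when the $d_i$-orderings fit together into an NNCMO.

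First I would reduce cosimpliciality to the single family $d^{j}d^{i}=d^{i}d^{j-1}$ for $i<j$. Every other cosimplicial identity either is a codegeneracy--codegeneracy identity or involves at least one codegeneracy; since a codegeneracy never reorders anything, in each such case the verification is the one already carried out for the commutative/multimodule construction of Theorem \ref{higher}, the algebra products that appear being over singleton or otherwise already-fixed fibers. Thus the only genuinely new thing to check is the compatibility of two coface maps, which is where the orderings interact.

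Next I would write out both sides of $d^{j}d^{i}=d^{i}d^{j-1}$ explicitly. Evaluating $d^{j}_{n+1}d^{i}_{n}(f)$ on $\bigotimes_{\tau\in X_{n+2}}a_\tau$ deposits into the tensor slot of $\Omega\in X_{n}$ the product $\prod_{\sigma:\,d_i(\sigma)=\Omega}\ \prod_{\tau:\,d_j(\tau)=\sigma}a_\tau$, that is, the product over the $(d_id_j)$-fiber of $\Omega$ taken in the composition induced ordering arising from the factorization $X_{n+2}\xrightarrow{d_j}X_{n+1}\xrightarrow{d_i}X_{n}$, and it deposits into the action prefix the analogous product over $\{\tau:d_id_j(\tau)=\ast\}$; evaluating $d^{i}_{n+1}d^{j-1}_{n}(f)$ yields the same data but with the composition induced ordering coming from $X_{n+2}\xrightarrow{d_i}X_{n+1}\xrightarrow{d_{j-1}}X_{n}$. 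By the simplicial identity $d_id_j=d_{j-1}d_i$ the two underlying maps $X_{n+2}\to X_n$, hence the fibers and the basepoint-action sets, coincide, so the two multilinear maps agree if and only if these two composition induced orderings agree on every fiber lying over a non-basepoint simplex.

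Finally I would close the equivalence. If $X_\bullet$ admits an NNCMO, then both composition induced orderings above equal the chosen ordering on the common composite $d_id_j=d_{j-1}d_i$, hence coincide, so $d^{j}d^{i}=d^{i}d^{j-1}$; applying the same reasoning to longer composites---and using that any two factorizations of a composite of face maps differ by a sequence of adjacent transpositions, each preserving the composition induced ordering by the two-face case---gives all remaining instances, so $(A,M,X)^\bullet$ is a cosimplicial $k$-module. Conversely, assuming the construction yields a cosimplicial $k$-module, take $A$ to be a free $k$-algebra on sufficiently many generators so that distinct ordered monomials in the $a_\tau$ are $k$-linearly independent, and $M$ a multimodule of the permitted type through which the construction factors nontrivially; then $d^{j}d^{i}=d^{i}d^{j-1}$ forces the two composition induced orderings of every adjacent pair of face maps to agree, whence the composition induced ordering of any composite is factorization independent, and declaring these to be the chosen orderings exhibits an NNCMO. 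I expect the principal obstacle to be the bookkeeping in the previous step---transporting fibers, basepoint-action sets, and especially orderings across the identity $d_id_j=d_{j-1}d_i$---together with verifying that the codegeneracy-involving identities really do introduce no further ordering constraint; in the converse direction a smaller but genuine point is to produce an admissible pair $(A,M)$ that actually detects the orderings.
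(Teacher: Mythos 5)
Your proposal is correct and follows essentially the same route as the paper, which likewise argues that the chosen $d_i$-orderings make the multiplication in each tensor factor of each coface well defined, that the NNCMO compatibility is exactly what makes compositions of cofaces agree, and that conversely existence of the cosimplicial $k$-module forces the induced orderings to be compatible; your write-up simply supplies the details (reduction to $d^jd^i=d^id^{j-1}$, the observation that codegeneracies impose no ordering constraint) that the paper leaves as ``straightforward.'' The only cosmetic difference is in the converse, where you pass to a free algebra to detect orderings, whereas the statement fixes a noncommutative $A$ and one can detect directly by placing a noncommuting pair $a,b$ in two slots whose relative order differs and $1$ elsewhere (assuming $M\neq 0$), a quantification point the paper itself glosses over.
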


\begin{proof}
From \cite{MR3338542} we see that if $A$ is a commutative algebra, then $(A,M,X)^\bullet$ exists.  To see that $(A,M,X)^\bullet$ still exists when $A$ is not necessarily commutative and $X_\bullet$ admits an NNCMO, we first notice that the  orderings of $X_\bullet$ give an ordering for the multiplication in each tensor factor for each coface map in $(A,M,X)^\bullet.$  It is straightforward to see that the compatibility of orderings, coming from the fact that $X_\bullet$ admits an NNCMO forces the composition of coface maps to be well defined with respect to the order in which elements in each tensor factor are to be multiplied.  For the other direction, notice if $(A,M,X)^\bullet$ exists, then each coface map provides an ordering for the associated face maps in $X_\bullet.$  We see that these orderings must be compatible for $(A,M,X)^\bullet$ to be a well defined cosimplicial $k$-module, which implies that $X_\bullet$ admits an NNCMO.    
\end{proof}

\begin{remark}
It should be noted that this construction is not functorial.  From the Theorem above, we see that higher order Hochschild cohomology with noncommutative algebras only works if the simplicial set that it is built over admits an NNCMO.  
\end{remark}

\subsection{Relationship to Pirashvili and Richter's construction}
\label{PR}
In \cite{MR1899698}, Pirashvili and Richter use a similar approach when defining functor homology.  Starting with the categories of noncommutative finite sets, $\mathcal{F}(as)$ and pointed noncommutative finite sets $\Gamma (as),$ they show that the traditional Hochschild chain complex is given by the composition of functors

$$L(A,M)\circ \hat{C}\colon \Delta^{op}\rightarrow \Gamma (as)\rightarrow \kmod$$
where $\mathcal{F}(as)$ and $\Gamma (as)$ are defined to be the categories of finite sets $\mathcal{F}$ and pointed finite sets $\Gamma$ with the property that maps have a total ordering on preimages and $\hat{C}$ is a lifting of the pointed simplicial circle $C\colon \Delta^{op}\rightarrow \Gamma.$

This construction is very similar to the way the current paper considers simplicial sets which admit an NNCMO.  In fact, any simplicial set $X_\bullet$ which has a lifting to $\hat{X}_\bullet \colon \Delta^{op}\rightarrow \Gamma (as)$ has the property that $(A,M,X)^\bullet$ is a cosimplicial $k$-module for any associative algebra $A.$  The main difference between the approach in \cite{MR1899698} and the approach here is that it is conceivable that $X_\bullet$ can admit an NNCMO without inducing $f$-orderings on the subsets of $X_n$ whose image is the basepoint.  This does not guarantee a lifting $\hat{X}_\bullet \colon \Delta^{op}\rightarrow \Gamma (as).$  We show that the absence of an $f$-ordering on such subsets requires $M$ to have a specific type of action in the next subsection.  In addition, the main goal of Pirashvili and Richter was to construct a generalization of Hochschild and cyclic homology, while the goal in the present paper is to provide a description of simplicial sets which still work with noncommutative algebras.  We determine an exact list of simplicial sets in Section \ref{SSTHATWORK} by using this approach. 

\begin{remark}
The category $\mathcal{F}(as)$ is isomorphic to the category $\Delta S$ of Fiedorowicz and Loday in \cite{MR998125}
\end{remark}

\subsection{Module actions}
Before getting to the main Theorem which determines precisely which simplicial sets allow noncommutative algebras, we will consider what multimodule actions arise if $A$ is noncommutative.  
For a commutative algebra $A$ and $A$-module $M,$ it can be seen that any left action is also by definition a right action (we will denote such an action as an \emph{lr action}), but for noncommutative algebras, coefficient modules need not have lr actions.  To see what actions on coefficient modules need not be lr actions we consider the following proposition.

\begin{proposition}
For the cosimplicial $k$-module $(A,M,X)^\bullet$ suppose there exists $\sigma < \tau \in X_n$ with $d_{i_k}\cdots d_{i_1}(\sigma)=d_{i_k}\cdots d_{i_1}(\tau)=\omega \neq \ast$ with $d_{i_r}\cdots d_{k+1} (\omega)=\ast$ and $d_{r-1}\cdots d_{k+1} (\omega)\neq \ast$ then $\Lambda_{(i_r,n-r)}^{d_{r-1}\cdots d_{k+1}(\omega)}$ is a 
\begin{itemize}
\item left action if there exists a set of maps $d_{j_r}\cdots d_{j_1}=d_{i_r} \cdots d_{i_1}$ with the property that $d_{j_{r-l}}\cdots d_{j_{1}}(\tau)=\ast$ but $d_{j_{r-l}}\cdots d_{j_{l}}(\sigma)\neq\ast$ for some $l<r.$
\item right action if there exists a set of maps $d_{j_r} \cdots d_{j_1}=d_{i_r}\cdots d_{i_1}$ with the property that $d_{j_{r-l}}\cdots d_{j_{l}}(\sigma)=\ast$ but $d_{j_{r-l}}\cdots d_{j_{l}}(\tau)\neq\ast$ for some $l<r.$ 
\item lr action if $\Lambda^{d_{j_{r-1}}\cdots d_{j_{k+1}}(\omega)}_{(i_r,n-r)}$ is both a right and left action.
\end{itemize}
\end{proposition}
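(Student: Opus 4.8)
The plan is to compute the iterated coface operator $d^{i_r}\circ\cdots\circ d^{i_1}$ implicit in the statement along \emph{two} different factorizations and to compare the order in which the elements $a_\sigma$ and $a_\tau$ get absorbed into $M$; the requirement that the two computations agree --- part of the cosimplicial identities, which hold since $(A,M,X)^\bullet$ exists --- is exactly what forces $\Lambda:=\Lambda^{d_{i_{r-1}}\cdots d_{i_{k+1}}(\omega)}_{(i_r,n-r)}$ to be of the asserted type. I would first assemble the bookkeeping that the NNCMO provides. Because $(A,M,X)^\bullet$ exists, $X_\bullet$ admits an NNCMO, so along any composite of face maps each non-basepoint fiber carries a well-defined total order; under the convention that a smaller simplex multiplies on the left, this makes the element of $A$ attached to a simplex after a composite of face maps unambiguous, and conditions (i)--(iv) of Theorem \ref{higher} identify any two actions that could otherwise be handed conflicting inputs. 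In particular, since $\sigma<\tau$ and $d_{i_k}\cdots d_{i_1}(\sigma)=d_{i_k}\cdots d_{i_1}(\tau)=\omega$, after $d_{i_k}\cdots d_{i_1}$ the tensor factor indexed by $\omega$ --- and hence, after the subsequent contractions, the one indexed by $\omega':=d_{i_{r-1}}\cdots d_{i_{k+1}}(\omega)$ --- carries a product in which $a_\sigma$ sits to the left of $a_\tau$, and when $d_{i_r}$ collapses $\omega'$ this entire product is fed into $M$ through $\Lambda$, contributing a single factor of the form $\Lambda(\cdots a_\sigma\cdots a_\tau\cdots)$.

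Next I would use the rule governing how the actions accumulate: in the composite coface operator the coface applied last places the actions it produces to the left of everything already present, so an element attached to a simplex that is collapsed to the basepoint at an earlier stage of the contraction ends up acting on $M$ further to the left. Granting the hypothesis of the first bullet --- a factorization $d_{j_r}\cdots d_{j_1}=d_{i_r}\cdots d_{i_1}$ in which the descendant of $\tau$ reaches the basepoint strictly before the descendant of $\sigma$ does --- the elements $a_\sigma$ and $a_\tau$ now enter $M$ at different stages, through actions that (i)--(iv) identify with $\Lambda$, and the relevant part of the resulting expression has $a_\sigma$ and $a_\tau$ sitting in \emph{separate} copies of $\Lambda$ rather than inside a common one. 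Comparing with the expression $\Lambda(\cdots a_\sigma\cdots a_\tau\cdots)$ coming from the merging factorization, the cosimplicial identity forces $\Lambda$ applied to the product $a_\sigma a_\tau$ to equal one of $\Lambda(a_\sigma)\Lambda(a_\tau)$ or $\Lambda(a_\tau)\Lambda(a_\sigma)$, according to the left--right order in which the separated copies land; these are precisely the defining identities $\Lambda(xy)=\Lambda(x)\Lambda(y)$ of a left action and $\Lambda(xy)=\Lambda(y)\Lambda(x)$ of a right action, and tracing through which coface contributes on which side one obtains the left-action identity under the first bullet's hypothesis. The second bullet is the mirror argument with $\sigma$ and $\tau$ interchanged: an early exit of the descendant of $\sigma$ produces the reversed relation and hence a right action. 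For the third bullet, if factorizations of both kinds are available then $\Lambda$ satisfies both identities at once, hence is simultaneously a left and a right action, i.e. an lr action; the noncommutativity of $A$ is exactly what makes the first two cases genuinely distinct, since over a commutative algebra both identities hold automatically.

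The step I expect to be the main obstacle is the comparison above: between $\Lambda(\cdots a_\sigma\cdots)$ and $\Lambda(\cdots a_\tau\cdots)$, and inside each copy of $\Lambda$, there sit further elements of $A$ coming from the other fibers that collapse along the way, and the two factorizations in general pass through different intermediate simplices; one must argue that all of this extra data can be slid past one another and past the relevant actions without affecting the comparison, so that the cosimplicial identity genuinely isolates the relation among $\Lambda(a_\sigma a_\tau)$, $\Lambda(a_\sigma)$ and $\Lambda(a_\tau)$. This is where the commutativity built into Definition \ref{multimodule} and the action identifications (i)--(iv) of Theorem \ref{higher} do the work, guaranteeing that any two actions that could a priori conflict in fact coincide; with that in hand, checking that the two iterated coface computations differ only in the left--right placement of $a_\sigma$ and $a_\tau$ is routine, if notation-heavy.
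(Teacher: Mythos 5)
Your overall strategy is the same as the paper's: evaluate the one composite coface operator along the two factorizations, $\delta^{j_1}\cdots\delta^{j_r}f=\delta^{i_1}\cdots\delta^{i_r}f$, use the action identifications of \cite{MR3338542} (conditions (i)--(iv) of Theorem \ref{higher}) to see that the action absorbing $a_\sigma$, the action absorbing $a_\tau$, and $\Lambda:=\Lambda^{d_{i_{r-1}}\cdots d_{i_{k+1}}(\omega)}_{(i_r,n-r)}$ all coincide, and then read the forced identity relating $\Lambda(a_\sigma a_\tau)$ to the two separated operators as a left or right module axiom; your attention to the extraneous fiber elements is, if anything, more explicit than the paper's ``among other elements.''

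The problem is that the one step carrying the actual content --- which handedness follows from which hypothesis --- is asserted rather than traced, and as written your bookkeeping is internally inconsistent. You state the accumulation rule that the coface applied last (the outermost one, corresponding to the face map applied first to simplices) puts its actions to the left of everything already present, so an element whose simplex dies earlier acts further to the left. Under the first bullet's hypothesis $\tau$ dies before $\sigma$, so your rule gives the separated expression $\Lambda(a_\tau)[\Lambda(a_\sigma)[f(\cdots)]]$, while the merging factorization gives $\Lambda(a_\sigma a_\tau)[f(\cdots)]$ with $a_\sigma$ on the left since $\sigma<\tau$; equating these yields $\Lambda(xy)=\Lambda(y)\Lambda(x)$, which by your own labelling is the \emph{right}-action identity, not the left-action conclusion you draw. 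The paper's proof reaches ``left action'' via the opposite placement: there $a_\tau$ acts first on $f(-)$ and $a_\sigma$ acts outside it, giving $a_\sigma(a_\tau f(-))=(a_\sigma a_\tau)f(-)$, the left-module axiom. So either your accumulation rule or your final reading of it must be reversed; as it stands the two halves of your argument contradict each other precisely at the step (``tracing through which coface contributes on which side'') that you leave undone. Fix the order of operator accumulation explicitly from the coface formula in Theorem \ref{higher}, carry out that trace once, and the left/right/lr trichotomy then falls out exactly as in the paper.
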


\begin{proof}
Let $\sigma , \tau$ and $\omega$ be the simplicies from the proposition above.  We will prove the proposition for a left action, but note that an analogous proof works for right actions.  We can assume that there exists $1\leq t <s <r$ and $1\leq w <r$ so that 
$$d_{j_s}\cdots d_{j_1}(\sigma)=\ast$$
$$d_{j_{s-1}}\cdots d_{j_{1}}(\sigma)\neq \ast$$
$$d_{j_t}\cdots d_{j_1}(\tau)=\ast$$
$$d_{j_{t-1}}\cdots d_{j_{1}}(\tau)\neq \ast$$

and from \cite{MR3338542} $\Lambda^{d_{j_{s-1}}\cdots d_{j_{1}}(\sigma)}_{(j_s,n-s)}=\Lambda^{d_{j_{t-1}}\cdots d_{j_{1}}(\tau)}_{(j_t,n-t)}=\Lambda^{d_{j_{r-1}}\cdots d_{j_{k+1}}(\omega)}_{(i_r,n-r)}$ which we will simply denote as $\Lambda ,$ but $\delta^{j_1}\cdots \delta^{j_r}f(-)=\delta^{i_1}\cdots \delta^{i_r}f(-)$ and among other elements, we see that on the left we have $a_\tau$ acting of $f(-)$ followed by $a_\sigma$ acting on $a_\tau \cdot f(-)$ and on the right we have $a_\sigma a_\tau$ acting on $f(-)$ so we have $a_\sigma(a_\tau f(-) = (a_\sigma a_\tau )f(-)$ so $\Lambda$ is a left action.
\end{proof}

This gives whether an action $\Lambda$ is left/right or an lr action from the perspective of each simplex.  Recall from \cite{MR3338542} that there are many action identifications so to determine if $\Lambda$ need only be left/right we need to actually consider all action identifications i.e. if $\Lambda^\sigma =\Lambda^\tau$ and $\Lambda^\sigma$ is a left action while $\Lambda^\tau$ is a right action, then they are both the same lr action.

\begin{theorem}
Let $X_\bullet$ be a simplicial set with an NNCMO.  Let $A$ be a not necessarily commutative algebra and $M$ be an $A$-multimodule.  Let $C$ be the set of possible distinct actions by $X_\bullet$ (with whether the action is left or right or lr indicated) and $D$ be the set of actions of $A$ on $M.$  Let $f\colon C \rightarrow D$  be a map of sets which preserves action type, then $(A,M,X)^\bullet$ exists where each action $\Lambda^\sigma$ of $A$ on $M$ is determined by $f.$
\end{theorem}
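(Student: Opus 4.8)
The plan is to reuse the construction of Theorem~\ref{higher} essentially verbatim, adjusting only for the two features the hypotheses introduce: the possible noncommutativity of $A$, which is absorbed by the NNCMO, and the promotion of the abstract action symbols to genuine $A$-actions on $M$, which is the role of $f$. Concretely, I would set $(A,M,X)^n=\hom_k(k\otimes_k\bigotimes_{\sigma\in X_n,\,\sigma\neq\ast}A,M)$ and copy the coface and codegeneracy formulas from Theorem~\ref{higher}, imposing two readings throughout: every product $\prod a_\sigma$ inside a tensor factor is the \emph{ordered} product determined by the chosen $d_i$-ordering of $X_\bullet$ (smaller simplices to the left), and every symbol $\Lambda^\sigma_{(i,n)}$ denotes the action $f$ attaches to the class of that symbol in $C$. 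Since $f$ preserves action type, a class recorded in $C$ as left, right, or lr is sent to a left, right, or simultaneously-left-and-right $A$-module structure on $M$, so each $\Lambda^\sigma_{(i,n)}(a_\sigma)$ occurring in the coface formula is a legitimate operation on $M$; $k$-linearity of the resulting maps is immediate.

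Next I would verify the cosimplicial identities. The codegeneracy identities and the mixed $s$--$d$ identities should go through exactly as in \cite{MR3338542}, since a codegeneracy only inserts a unit $1\in A$ and never manufactures an action on $M$, while the simplicial identities of $X_\bullet$ together with the NNCMO handle the tensor factors. The real work is the coface identity $d^j d^i=d^i d^{j-1}$ for $i<j$. Fixing such $i<j$ and a nonbasepoint simplex $\omega$ of the appropriate degree, I would compare the two composites term by term. In each tensor factor one obtains an ordered product over the $a_\sigma$ whose image under the relevant composite of face maps is $\omega$, and the two composites order this product by their two composition-induced orderings, which agree by the NNCMO hypothesis. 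The ``outer'' contribution to $M$ is, on each side, a product of actions $\Lambda^\sigma_{(\cdot,\cdot)}(a_\sigma)$ over the $\sigma$ whose face image becomes the basepoint; I would argue these agree by combining the four identifications i)--iv) of Theorem~\ref{higher} (which equate the relevant action symbols class by class), the commutation axioms of Definition~\ref{multimodule} (which reorder actions coming from distinct commuting classes), and the preceding Proposition (which forces a class seen from the left in one composite and from the right in the other to be an lr class, so that $f$ sends it to an action that is both left and right and the two expressions match). Assembling these observations yields the coface identity, hence $(A,M,X)^\bullet$ is a cosimplicial $k$-module.

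The step I expect to be the main obstacle is the last one: organizing the case analysis so that every way a single action slot is ``seen'' by the two composites $d^j d^i$ and $d^i d^{j-1}$ falls into exactly one of the three buckets — identified by i)--iv), commuting by Definition~\ref{multimodule}, or a genuine left/right clash governed by the preceding Proposition — and confirming that no further identification of elements of $C$ is silently required beyond what $C$ already records with its type labels. The hypotheses are tailored to exactly this: $C$ is by definition the set of all distinct actions $X_\bullet$ can impose, together with their forced types, and $f$ is type-preserving, so once the case analysis is seen to be exhaustive the verification reduces to the bookkeeping already carried out in \cite{MR3338542}, now with ordered products in place of unordered ones. A secondary point to watch is the convention ``smaller simplex on the left'': one must check it is consistent across all identities simultaneously, which it is precisely because the orderings all come from a single NNCMO, so that ordered products behave associatively under composition of face maps.
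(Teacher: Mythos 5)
Your proposal is correct and follows essentially the same route as the paper, which simply states that the result is ``a straight forward check that the cosimplicial identities hold'' and leaves it to the reader; your write-up is a fleshed-out version of exactly that check, using the formulas of Theorem \ref{higher}, the NNCMO-ordered products, and the type-preserving map $f$ in the way the paper intends.
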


The proof of this is a straight forward check that the cosimplicial identities hold so it is left to the reader. 

\section{Simplicial sets that work}
\label{SSTHATWORK}
In general, given a simplicial set $X_\bullet,$ it seems that it should be a daunting task to determine if $X_\bullet$ admits an NNCMO.  Unfortunately, it turns out that very few simplicial sets admit an NNCMO.  We start by showing which simplicial sets do not.  For a motivating example, we consider a simplicial model for $S^2$

\begin{proposition}
\label{SPHERE}
The minimal simplicial decomposition of $S^2$ with one nondegenerate $0$-simplex and one nondegenerate $2$-simplex does not admit an NNCMO.
\end{proposition}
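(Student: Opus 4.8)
The plan is to produce, inside the minimal model $S^2_\bullet$, one composite of face maps $X_4\to X_2$ which factors in two ways so that the two factorizations force incompatible orderings on one of its fibers. Write $\ast$ for the basepoint in each degree and $\sigma\in X_2$ for the unique nondegenerate $2$-simplex, so that $d_0\sigma=d_1\sigma=d_2\sigma=\ast$ and, in each positive degree, every simplex except the iterated degeneracies of $\sigma$ is a basepoint. The key preliminary observation is that the only fibers of (composites of) face maps whose image is not a basepoint are the iterated fibers over $\sigma$; hence an NNCMO only constrains orderings of such fibers, and --- since any linear order on a two-element set is as good as any other --- an obstruction can only come from a fiber of size at least four on which two factorizations of the same composite impose incompatible block structures. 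I expect the entire argument to reduce to exhibiting one such configuration, and the smallest one lives over a composite $X_4\to X_2$.

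Concretely, set $\tau_\ell:=s_\ell\sigma\in X_3$ for $\ell=0,1,2$ and $\rho_{ij}:=s_i s_j\sigma=s_i\tau_j\in X_4$ for $i\in\{0,1\}$ and $j\in\{1,2\}$; these four $4$-simplices are pairwise distinct (they are the images of the nondegenerate $\sigma$ under four distinct degeneracy operators) and none is a basepoint. Using the simplicial identities $d_1 s_0=d_1 s_1=\mathrm{id}$, $d_2 s_1=d_2 s_2=\mathrm{id}$ and $d_3 s_i=s_i d_2$ for $i\le 1$, one computes
$$d_1(\rho_{ij})=\tau_j,\qquad d_3(\rho_{ij})=s_i\,d_2(\tau_j)=s_i\sigma=\tau_i,\qquad d_1(\tau_0)=d_1(\tau_1)=d_2(\tau_1)=d_2(\tau_2)=\sigma .$$
So under $d_3\colon X_4\to X_3$ the four simplices $\rho_{ij}$ split according to the first index $i$ (those with $i=0$ land on $\tau_0$, those with $i=1$ on $\tau_1$), while under $d_1\colon X_4\to X_3$ they split according to the second index $j$: the $d_3$-fibers and the $d_1$-fibers cut the $2\times 2$ array $\{\rho_{ij}\}$ into its rows and into its columns respectively.

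Now invoke the simplicial identity $d_1 d_3=d_2 d_1$: the composite $X_4\xrightarrow{d_3}X_3\xrightarrow{d_1}X_2$ and the composite $X_4\xrightarrow{d_1}X_3\xrightarrow{d_2}X_2$ are the same map $X_4\to X_2$, and by the computations above it sends every $\rho_{ij}$ to $\sigma\ne\ast$, so $\{\rho_{ij}\}$ lies in its fiber over $\sigma$. If $S^2_\bullet$ admitted an NNCMO, the ordering it assigns to this fiber would have to agree with the composition-induced ordering coming from the factorization $f=d_3,\ g=d_1$ --- in which each $d_3$-fiber occurs as a block, so each row of $\{\rho_{ij}\}$ occupies consecutive positions --- and also with the composition-induced ordering coming from $f=d_1,\ g=d_2$, in which each column occupies consecutive positions. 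No linear order on a $2\times2$ array has every row and every column appear as a block of consecutive elements, a contradiction; hence $S^2_\bullet$ admits no NNCMO.

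The only part needing care is the bookkeeping of the face maps through the simplicial identities, which is routine; the substance of the argument is the crossing-partition phenomenon forced by a nondegenerate simplex of dimension $\ge 2$ sitting over a common lower face. I would expect essentially this mechanism, run for an arbitrary nondegenerate $\ge 2$-simplex, to drive the proof of the general classification in Section \ref{SSTHATWORK}.
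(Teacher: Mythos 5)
Your proposal is correct and takes essentially the same route as the paper: your four simplices $\rho_{ij}=s_is_j\sigma$ are exactly the paper's $[00112],[00122],[01112],[01122]$, and your obstruction is the same incompatibility between the $d_1$-fibers and $d_3$-fibers inside the fiber of $d_1d_3=d_2d_1$ over $[012]$. The only difference is that you spell out the face-map bookkeeping and the impossibility of a linear order making both the rows and the columns of the $2\times 2$ array consecutive, which the paper leaves as ``it can be seen.''
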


\begin{proof}
Consider the $4$-simplices $[00112], [00122], [01122], [01112].$  Each of these simplices is carried to the $2$-simplex $[012]$ by the composition of face maps $d_2d_1=d_1d_3.$  We have a $d_1$-ordering for the sets $\{[00112],[01112]\}$ and $\{[00122],[01122]\}$, but there is also a $d_3$-ordering for the sets $\{[00112],[00122]\}$ and $\{[01122],[01112]\}.$  It can be seen that no ordering of these four simplicies for $d_2d_1=d_1d_3$ will agree with both the $d_3$-ordering and the $d_1$-ordering.
\end{proof}

One might guess that if $S^2$ does not admit an NNCMO, then it may not be possible for any simplicial set of dimension equal or greater than 2 to admit an NNCMO.  This brings us to the main Theorem.

\begin{theorem}
\label{MAIN}
Let $X_\bullet$ be a finite simplicial set, then $X_\bullet$ admits an NNCMO if and only if $X_\bullet$ is a one dimensional simplicial set, so $HH_{X_\bullet}^\ast(A,M)$ exists when $A$ is noncommutative if and only if the simplicial sets are one dimensional. 
\end{theorem}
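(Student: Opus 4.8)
The plan is to establish the equivalence ``$X_\bullet$ admits an NNCMO $\iff$ $\dim X_\bullet\le 1$''; the statement about $HH^\ast_{X_\bullet}(A,M)$ then follows from the theorem of Section~\ref{notnec} identifying the existence of $(A,M,X)^\bullet$ with the existence of an NNCMO.

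For the implication $\dim X_\bullet\le 1\Rightarrow$ NNCMO I would use the Eilenberg--Zilber description of simplices: when $\dim X_\bullet\le 1$, every $\sigma\in X_n$ is uniquely $s_\eta(\tau)$ with $\tau$ a non-degenerate vertex or edge, and for $\tau$ an edge the surjection $\eta\colon[n]\twoheadrightarrow[1]$ is recorded by a single breakpoint $j\in\{0,\dots,n-1\}$. Given a composite of face maps $h=\phi^\ast\colon X_n\to X_m$ (with $\phi\colon[m]\hookrightarrow[n]$ order preserving) and $\omega\in X_m$ with non-basepoint edge core $\tau$ and breakpoint $j'$, the fiber $h^{-1}(\omega)$ is canonically the set of $(\tau,j)$ with $j$ ranging over the (nonempty, since $\phi$ is injective) integer interval $[\phi(j'),\phi(j'+1)-1]$, which I order by increasing $j$. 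For $\omega$ with vertex core one fixes once and for all a total order on the finite set of non-degenerate simplices and concatenates the analogous intervals accordingly (such fibers need not be ordered for an NNCMO, but this does no harm). Because $\phi\mapsto\phi^\ast$ and the passage from $\eta$ to breakpoints are order preserving, for any factorization $h=g\circ f$ the composition induced ordering sorts $h^{-1}(\omega)$ lexicographically, first by the breakpoint of the intermediate simplex $f(-)$ and then by the final breakpoint, which is exactly the interval order above; hence the chosen orderings are compatible and $X_\bullet$ admits an NNCMO. (Equivalently, a one dimensional $X_\bullet$ lifts along $\mathcal F(as)\to\mathcal F$; cf. Subsection~\ref{PR}.)

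For the converse, assume $\dim X_\bullet\ge 2$ and fix a non-degenerate $\sigma\in X_d$ with $d\ge 2$, so $\sigma\ne\ast$. In $X_{d+2}$ take the four degeneracies $A,B,C,D$ of $\sigma$ whose defining surjections $[d+2]\twoheadrightarrow[d]$ have non-singleton fibers $\eta_A^{-1}(0)=\{0,1\}$, $\eta_A^{-1}(1)=\{2,3\}$; $\eta_B^{-1}(1)=\{1,2,3\}$; $\eta_C^{-1}(0)=\{0,1\}$, $\eta_C^{-1}(2)=\{3,4\}$; $\eta_D^{-1}(1)=\{1,2\}$, $\eta_D^{-1}(2)=\{3,4\}$ respectively (all other fibers singletons); for $d=2$ these are $[00112]$, $[01112]$, $[00122]$, $[01122]$ as in Proposition~\ref{SPHERE}. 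Since $\sigma$ is non-degenerate these are four distinct non-basepoint simplices, and a direct computation with the simplicial identities $d_id_j=d_{j-1}d_i$ $(i<j)$ gives $d_1A=d_1B$, $d_1C=d_1D$, $d_3A=d_3C$ and $d_3B=d_3D$, with the $d_1$-images of $\{A,B\}$ and $\{C,D\}$ distinct, and likewise the $d_3$-images of $\{A,C\}$ and $\{B,D\}$ distinct; moreover $d_2d_1=d_1d_3$ carries each of $A,B,C,D$ to $\sigma\ne\ast$.

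Now apply the NNCMO condition to $h=d_2d_1=d_1d_3\colon X_{d+2}\to X_d$ on its fiber over $\sigma$. Reading $h=d_2\circ d_1$, the composition induced order is lexicographic, first by the $d_2$-order of the $d_1$-image and then by the chosen $d_1$-fiber order, so its restriction to $\{A,B,C,D\}$ makes $\{A,B\}$ and $\{C,D\}$ each an interval; reading $h=d_1\circ d_3$ instead makes $\{A,C\}$ and $\{B,D\}$ each an interval. An NNCMO would require a single total order on $h^{-1}(\sigma)$ whose restriction to $\{A,B,C,D\}$ agrees with both, but no linear order on four elements realizes both $\{\{A,B\},\{C,D\}\}$ and $\{\{A,C\},\{B,D\}\}$ as partitions into intervals, a contradiction. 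I expect the main obstacle to lie not here but in the coherence verification of the first direction --- specifically, arranging the auxiliary orderings on fibers landing on totally degenerate simplices to be compatible across all factorizations at once; by contrast the only real subtlety in the $\dim\ge 2$ case is confirming that this four-simplex configuration works for an arbitrary non-degenerate simplex of dimension $\ge 2$, which need not contain a non-degenerate $2$-simplex, as the minimal model of $S^3$ shows.
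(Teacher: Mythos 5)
Your proposal is correct and follows essentially the same route as the paper: the negative direction uses exactly the paper's four double degeneracies of a nondegenerate simplex of dimension at least $2$ (for $d=2$ these are $[00112],[01112],[00122],[01122]$) compared under $d_2d_1=d_1d_3$, with the same interval/block contradiction, and the positive direction is the paper's cyclic ordering (degeneracies of each $1$-simplex ordered by breakpoint, i.e.\ ``alphabetically,'' and edges ordered by a fixed choice), only phrased via Eilenberg--Zilber data and verified in a bit more detail. The lone quibble is your parenthetical that fibers over vertex-core simplices ``need not be ordered'': only fibers mapping to the basepoint are exempt from the NNCMO condition, but since you order those fibers anyway this does not affect the argument.
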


Before proving Theorem \ref{MAIN}, let us consider examples of some one dimensional simplicial sets which do admit an NNCMO.  We start by describing a ``nicer" ordering on fibers of face maps. 

\begin{definition} 
We say that a pointed simplicial set $X_\bullet$ has a cyclic ordering if for every $n\geq 1,$ each set $X_n \smallsetminus \{\ast\}$ has an ordering with the property that if $\sigma , \tau \in X_n \smallsetminus \{\ast\}$ ($n\geq 2$) with $\sigma <\tau$ then $d_i(\sigma) \leq d_i(\tau)$ for all $0\leq i \leq n$
\end{definition}

\begin{remark}
It is an easy check to see that any set $X_\bullet$ which admits a cyclic ordering also admits an NNCMO, furthermore whether an action is left or right is also simple to see.  
\end{remark}

To motivate a proof that all one dimensional simplicial sets admit an NNCMO, we start with the following.

\begin{proposition}
Let $X_\bullet$ be the minimal simplicial decomposition of $\bigvee_{i\in I} S^1,$ then $X_\bullet$ has a cyclic ordering.
\end{proposition}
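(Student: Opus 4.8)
The plan is to write down an explicit linear order on $X_n\smallsetminus\{\ast\}$ for each $n\ge 1$ and to verify the defining inequality by direct inspection of the face maps. First I would recall the structure of the minimal model $X_\bullet=\bigvee_{i\in I}S^1_\bullet$: since a simplex of a wedge formed at the basepoints is either the basepoint or lies entirely in a single summand, for $n\ge1$ we have $X_n\smallsetminus\{\ast\}=\bigsqcup_{i\in I}\{\sigma^{(i)}_k : 1\le k\le n\}$, where $\sigma^{(i)}_k:=[0^k1^{n+1-k}]_i$ denotes the indicated degeneracy of the nondegenerate $1$-simplex of the $i$-th circle, in the string notation used in Proposition~\ref{SPHERE}. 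I would then record the face maps, which preserve the summand index: $d_j(\sigma^{(i)}_k)=\sigma^{(i)}_{k-1}$ for $j<k$ and $d_j(\sigma^{(i)}_k)=\sigma^{(i)}_{k}$ for $j\ge k$, with the convention that $\sigma^{(i)}_0$, and $\sigma^{(i)}_n$ read in $X_{n-1}$, both denote the basepoint $\ast$.

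Next I would fix any linear order on the index set $I$ and order $X_n\smallsetminus\{\ast\}$ lexicographically: $\sigma^{(i)}_a<\sigma^{(i')}_b$ iff $i<i'$, or $i=i'$ and $a<b$. For $n=1$ nothing further is required. For $n\ge2$, take $\sigma<\tau$ in $X_n\smallsetminus\{\ast\}$ and $0\le j\le n$ with $d_j(\sigma)$ and $d_j(\tau)$ both distinct from $\ast$; I would check $d_j(\sigma)\le d_j(\tau)$ in two cases. If $\sigma$ and $\tau$ lie in distinct summands $i<i'$, then $d_j(\sigma)$ lies in summand $i$ while $d_j(\tau)$ lies in summand $i'$, so the inequality is immediate. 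If they lie in the same summand, say $\sigma=\sigma^{(i)}_a$ and $\tau=\sigma^{(i)}_b$ with $a<b$, I would split on the ranges $j<a$, $a\le j<b$, and $j\ge b$; in each range the face formula shows that the index of $d_j(\sigma)$ (namely $a$ or $a-1$) is at most the index of $d_j(\tau)$ (namely $b$ or $b-1$), whence $d_j(\sigma)\le d_j(\tau)$. This exhibits the cyclic ordering.

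The one point that calls for care rather than routine checking is the bookkeeping at the basepoint: a face $d_j(\sigma^{(i)}_k)$ equals $\ast$ precisely when $k=1$ and $j<k$, or $k=n$ and $j\ge k$, and one must use that the cyclic-ordering condition imposes no constraint in those cases — which is exactly the phenomenon noted in Subsection~\ref{PR}, that a subset mapping to the basepoint need not be given any ordering. (Indeed, no placement of $\ast$ in a genuine linear order on $X_{n-1}$ would satisfy all the inequalities, so this convention is forced.) Once that is fixed the argument is entirely combinatorial, and, as in the remark just before the proposition, the same lexicographic recipe also makes it transparent that $X_\bullet$ admits an NNCMO and lets one read off whether each resulting action is left, right, or lr.
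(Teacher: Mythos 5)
Your proof is correct and follows essentially the same route as the paper: order the summands arbitrarily and within each circle order the $n$-simplices $[0^k1^{n+1-k}]$ by $k$, checking the face-map condition directly (your within-summand order is the reverse of the paper's displayed one, which is immaterial since the defining condition is preserved under simultaneously reversing all orders). Your explicit treatment of the basepoint convention and of faces across distinct summands just spells out what the paper leaves to inspection via its cyclic picture.
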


\begin{proof}
First notice that $X_\bullet$ has one $1$-simplex for each copy of $S^1$ and one degenerate $1$-simplex for the basepoint $\ast.$  The $1$-simplices of $X_1 \smallsetminus \{\ast\}$ can be ordered in any way, so all we need to do is order $X_n$ for larger $n.$  This can be done by giving a cyclic ordering for each sub-simplicial set $S_\bullet^1.$  One such ordering is 
$$S_1^1 : [01]$$
$$S_2^1 : [001]<[011]$$
$$S_3^1 : [0001]<[0011]<[0111]$$
$$S_4^1 : [00001]<[00011]<[00111]<[01111]$$

\end{proof}

We call the ordering above a cyclic ordering because we can actually order all of the $S^1$ $n$-simplices (including $\ast$ clockwise around a circle as is done in Figure \ref{fig:cyclic} (starting with $\ast =[0...0]=[1...1]$) and see that face maps $d_i$ have the property that $d_i(\sigma)=d_i(\tau)$ if and only if $\sigma$ and $\tau$ are in the $n+1-i$ and $n+2-i$ places around the circle.  From this, we can imagine that face maps essentially squeeze adjacent simplices together and do not change the order.

\begin{remark}
In the above ordering, $\Lambda^{[0...01]}$ is a right action, while $\Lambda^{[01...1]}$ is a left action.  This demonstrates a fact that we are already aware of--traditional Hochschild cohomology has the ability to work with non-commutative algebras and not necessarily symmetric bi-modules as coefficient modules.
\end{remark}
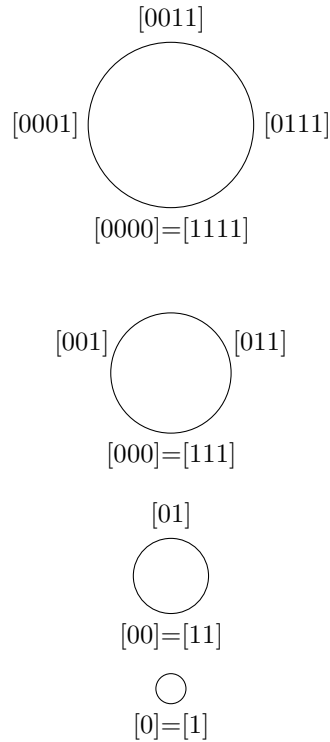
\begin{figure}
\centering

\begin{tikzpicture} 
\draw (8,0.2) circle [radius=0.2];
\draw (8,1.7) circle [radius=0.5];
\draw (8,4.4) circle [radius=0.8];
\draw (8,7.7) circle [radius=1.1];
\node at (8,0) [below] {[0]=[1]};
\node at (8,1.2) [below] {[00]=[11]};
\node at (8,2.2) [above] {[01]};
\node at (8,3.6) [below] {[000]=[111]};
\node at (7.3,4.8) [left] {[001]};
\node at (8.7,4.8) [right] {[011]};
\node at (8,6.6) [below] {[0000]=[1111]};
\node at (8,8.8) [above] {[0011]};
\node at (6.9,7.7) [left] {[0001]};
\node at (9.1,7.7) [right] {[0111]};

\end{tikzpicture}\caption{Cyclic ordering for $S_\bullet^1$}
  \label{fig:cyclic}
\end{figure}

We now proceed with the proof of Theorem \ref{MAIN}.

\begin{proof}[of Theorem \ref{MAIN}]
Suppose $X_\bullet$ is $n$-dimensional where $n\geq 2$ and let $\sigma$ be a nondegenerate simplex whose dimension is greater than or equal to 2.  Following a similar argument to the one in the proof of Proposition \ref{SPHERE} we see that the simplices $s_2s_0\sigma , s_3s_0 \sigma , s_3s_1 \sigma , s_1s_1 \sigma $ are carried to $\sigma$ via $d_2d_1 =d_1d_3.$  We have a $d_1$-ordering for the sets $\{s_2s_0\sigma , s_1s_1 \sigma\}$ and $\{s_3s_0\sigma , s_3s_1\sigma\}$ but we also have a $d_3$-ordering for the sets $\{s_2s_0\sigma ,s_3s_0\sigma \}$ and $\{s_3s_1\sigma ,s_1s_1\sigma\}.$  As in the proof of Proposition \ref{SPHERE} we see that this will not allow $X_\bullet$ to admit an NNCMO.  Now to see that any one dimensional simplicial set admits an NNCMO, we actually show that any one dimensional simplicial set has a cyclic ordering.  Notice that we can represent any simplex as the composition of degeneracy maps on a $1$-simplex.  Let us denote each simplex as follows: let $s_0s_3s_1s_1\sigma$ be denoted by $[001111]_\sigma$ if $\sigma$ is a nondegenerate $1$-simplex and $[000000]_\sigma$ otherwise.  By ordering the $1$-simplices, we can induce an order on the $n$-simplices by first ordering by subscript, so $[---\cdots-]_\sigma <[---\cdots-]_\tau$ if $\sigma <\tau$ in $X_1.$  We then order alphabetically.  For example, if $\sigma <\tau$ in $X_1$ then $[001]_\sigma <[011]_\sigma <[001]_\tau <[011]_\tau$ in $X_2.$  It is straight forward that this provides $X_\bullet$ with a cyclic ordering.   
\end{proof}
\section{Secondary Hochschild cohomology and pairs of simplicial sets}
\label{second}
In \cite{MR3465889} Staic introduced secondary Hochschild cohomology which was used to study $B$-algebra structures on $A[t]$ given $k$-algebras $A$ and $B$ with a map $\varepsilon \colon B \rightarrow A.$  In \cite{MR3566502} the author and Staic show that secondary Hochschild cohomology is a version of higher order Hochschild cohomology by generalizing Hochschild cohomology to pairs of simplicial sets $X_\bullet \subseteq Y_\bullet.$

To extend noncommutativity to pairs of algebras $A$ and $B$ we first consider the simplicial set $Y_\bullet.$  If $Y_\bullet$ is one dimensional, then $A$ and $B$ can both be non-commutative.  If $Y_\bullet$ is not one dimensional $B$ must be commutative.  In the second case, we then consider the simplicial set $X_\bullet.$  If $X_\bullet$ is one dimensional, then $A$ need not be commutative, however $\varepsilon (B)$ must be in the center of $A.$  For module coefficients, we simply consider where the action comes from in the simplicial set.

\section*{Acknowledgments}
I would like to thank Andrew Salch and Mihai Staic for conversations concerning this research.  I would also like to thank my family for their continued support; in particular, I am grateful to my loving wife Kendall, son Amos and daughter Flora.

%%%%%%%%%%%%%%%
%\section*{Acknowledgment}

%%%%%%%%%%%%%%%%%%%%%%%%%%%%%
%%%%%%%%%%%%%%%%%%%%%%%%%%%%%%%%%%%%%%%
%%%%%%%%

%%%%%%%%%%

\bibliography{Hochschild cohomology of noncommutative algebras}

\begin{thebibliography}{10}

\bibitem{MR3431668}
David Ayala and John Francis.
\newblock Factorization homology of topological manifolds.
\newblock {\em J. Topol.}, 8(4):1045--1084, 2015.

\bibitem{bokstedt}
Marcel {B{\''o}kstedt}.
\newblock Topological hochschild homology.
\newblock {\em Bielefeld}, 1985.

\bibitem{MR3338542}
Bruce~R. Corrigan-Salter.
\newblock Coefficients for higher order {H}ochschild cohomology.
\newblock {\em Homology Homotopy Appl.}, 17(1):111--120, 2015.

\bibitem{MR3566502}
Bruce~R. Corrigan-Salter and Mihai~D. Staic.
\newblock Higher-order and secondary {H}ochschild cohomology.
\newblock {\em C. R. Math. Acad. Sci. Paris}, 354(11):1049--1054, 2016.

\bibitem{MR3013261}
Bj{\o}rn~Ian Dundas, Thomas~G. Goodwillie, and Randy McCarthy.
\newblock {\em The local structure of algebraic {K}-theory}, volume~18 of {\em
  Algebra and Applications}.
\newblock Springer-Verlag London, Ltd., London, 2013.

\bibitem{MR998125}
Zbigniew Fiedorowicz and Jean-Louis Loday.
\newblock Crossed simplicial groups and their associated homology.
\newblock {\em Trans. Amer. Math. Soc.}, 326(1):57--87, 1991.

\bibitem{MR3040746}
John Francis.
\newblock The tangent complex and {H}ochschild cohomology of {$\mathscr
  E_n$}-rings.
\newblock {\em Compos. Math.}, 149(3):430--480, 2013.

\bibitem{MR0161898}
Murray Gerstenhaber.
\newblock The cohomology structure of an associative ring.
\newblock {\em Ann. of Math. (2)}, 78:267--288, 1963.

\bibitem{MR0171807}
Murray Gerstenhaber.
\newblock On the deformation of rings and algebras.
\newblock {\em Ann. of Math. (2)}, 79:59--103, 1964.

\bibitem{MR981619}
Murray Gerstenhaber and Samuel~D. Schack.
\newblock Algebraic cohomology and deformation theory.
\newblock In {\em Deformation theory of algebras and structures and
  applications ({I}l {C}iocco, 1986)}, volume 247 of {\em NATO Adv. Sci. Inst.
  Ser. C Math. Phys. Sci.}, pages 11--264. Kluwer Acad. Publ., Dordrecht, 1988.

\bibitem{MR2383113}
Gr{\'e}gory Ginot.
\newblock Higher order {H}ochschild cohomology.
\newblock {\em C. R. Math. Acad. Sci. Paris}, 346(1-2):5--10, 2008.

\bibitem{MR3173402}
Gr{\'e}gory Ginot, Thomas Tradler, and Mahmoud Zeinalian.
\newblock Higher {H}ochschild homology, topological chiral homology and
  factorization algebras.
\newblock {\em Comm. Math. Phys.}, 326(3):635--686, 2014.

\bibitem{MR0011076}
G.~Hochschild.
\newblock On the cohomology groups of an associative algebra.
\newblock {\em Ann. of Math. (2)}, 46:58--67, 1945.

\bibitem{MR551624}
Olav~Arnfinn Laudal.
\newblock {\em Formal moduli of algebraic structures}, volume 754 of {\em
  Lecture Notes in Mathematics}.
\newblock Springer, Berlin, 1979.

\bibitem{MR1217970}
Jean-Louis Loday.
\newblock {\em Cyclic homology}, volume 301 of {\em Grundlehren der
  Mathematischen Wissenschaften [Fundamental Principles of Mathematical
  Sciences]}.
\newblock Springer-Verlag, Berlin, 1992.
\newblock Appendix E by Mar{\'{\i}}a O. Ronco.

\bibitem{MR2555928}
Jacob Lurie.
\newblock On the classification of topological field theories.
\newblock In {\em Current developments in mathematics, 2008}, pages 129--280.
  Int. Press, Somerville, MA, 2009.

\bibitem{Markarian}
Nikita Markarian.
\newblock Manifoldic homology and chern-simons formalism.
\newblock {\em arXiv:1106.5352}, 2013.

\bibitem{MR1899698}
T.~Pirashvili and B.~Richter.
\newblock Hochschild and cyclic homology via functor homology.
\newblock {\em $K$-Theory}, 25(1):39--49, 2002.

\bibitem{MR1755114}
Teimuraz Pirashvili.
\newblock Hodge decomposition for higher order {H}ochschild homology.
\newblock {\em Ann. Sci. \'Ecole Norm. Sup. (4)}, 33(2):151--179, 2000.

\bibitem{MR3465889}
Mihai~D. Staic.
\newblock Secondary {H}ochschild cohomology.
\newblock {\em Algebr. Represent. Theory}, 19(1):47--56, 2016.

\bibitem{MR2184812}
Thomas Tradler and Mahmoud Zeinalian.
\newblock On the cyclic {D}eligne conjecture.
\newblock {\em J. Pure Appl. Algebra}, 204(2):280--299, 2006.

\bibitem{MR2353864}
Thomas Tradler and Mahmoud Zeinalian.
\newblock Algebraic string operations.
\newblock {\em $K$-Theory}, 38(1):59--82, 2007.

\bibitem{MR2153227}
Donald Yau.
\newblock Deformation theory of modules.
\newblock {\em Comm. Algebra}, 33(7):2351--2359, 2005.

\end{thebibliography}
\bibliographystyle{plain}

\end{document}